\documentclass[reqno,A4]{amsart}

\usepackage{amsmath}
\usepackage{amssymb}
\usepackage{amsthm}
\usepackage{enumerate}
\usepackage{url} 

\usepackage{eqlist}
\usepackage{array}

\setlength{\textwidth}{150mm}
\setlength{\textheight}{206mm}

\begin{document}

\theoremstyle{plain}
\newtheorem{theorem}{Theorem}   
\newtheorem{lemma}[theorem]{Lemma}   
\newtheorem{proposition}[theorem]{Proposition}   
\newtheorem{corollary}[theorem]{Corollary}   

\renewcommand{\labelenumi}{{\textnormal{(\roman{enumi})}}}


\title[Varieties of $*$-regular rings]{Varieties of $*$-regular rings}

\author[Christian  Herrmann]{Christian Herrmann}

\address{Technische Universit\"{a}t Darmstadt FB4\\Schlo{\ss}gartenstr. 7\\D--64289 Darmstadt\\GERMANY}

\email{herrmann@mathematik.tu-darmstadt.de}

\subjclass[2010]{Primary 16E50, 16W10}
\keywords{Regular ring with involution, directly finite, unit-regular, simple, variety}

\begin{abstract}
Given a subdirectly irreducible
$*$-regular ring $R$, we show that $R$ is a homomorphic image of a regular $*$-subring
 of an ultraproduct of  the (simple)  $eRe$, $e$ in the minimal ideal of $R$.
For any subdirect product of artinian $*$-regular rings
we construct a unit-regular and $*$-clean extension within its
variety. 
\end{abstract}

\maketitle

\section{Introduction}
Studying (von Neumann) regular and $*$-regular rings
from an Universal Algebra perspective
was introduced by Goodearl, Menal, and Moncasi \cite{good2}
(free regular rings)
and Tyukavkin \cite{tyu}, who showed 
unit-regularity for subdirect products
of countably many  $*$-regular rings which are algebras over a given uncountable field.
These also relate to the open question, raised by Handelman 
\cite[Problem 48]{good}, whether all $*$-regular rings are directly finite or even
unit-regular.

Tyukavkin's method of  matrix limits
has been elaborated by   Micol \cite{Flo}, Niemann
\cite{Nik}, Semenova, and this author \cite{PartI,act},
to deal with $*$-regular rings which  are representable
as $*$-rings of endomorphisms of 
anisotropic  inner product spaces (given by  $\varepsilon$-hermitean
forms).
In particular,  a variety 
(pseudo-inversion being included into the signature)
of $*$-regular rings
is generated by artinians if and only if
its subdirectly irreducible members are representable \cite[Theorem 3.18, Proposition 3.6]{Flo}.
A variant \cite{dfin} of the  method   constructs a $*$-regular preimage of $R$
within a saturated extension of given $*$-regular $R$.

The present note continues to 
study representability, unit-regualrity, and direct finiteness
from an Universal Algebra point of view.
Any variety 
of $*$-regular rings
 is shown
to be generated by its simple members -
the analogous result for modular ortholattices is due to \cite{hr}.
It follows, by \cite[Theorem 10.1]{act}, that
a variety has all subdirectly irreducibles representable if 
 all simple members are representable.
Finally, any  subdirect product of artinian
$*$-regular rings
is shown to have a unit-regular and $*$-clean extension
within its variety.

\section{Preliminaries}\label{pre}
Recall some basic concepts and results from 
\cite{berb,berb2,good}.
The signature of $*$-rings includes $*,+,-,\cdot,0$
with or without unit; in the former case, unit $1$ is included into
the signature.  
 Section \ref{uni}   requires  rings with unit. 
A $\ast$-\emph{ring} $R$ is a ring  having as additional
operation an 
involution $a \mapsto a^*$.

An element $e$ of $R$ is a
\emph{projection}, if $e=e^2=e^\ast$.
$R$ is \emph{proper},
if $aa^*=0$ only for $a=0$.
A proper $*$-ring $R$ is 
  $\ast$-\emph{regular}
if
it is (von Neumann) regular,
that is for any $a\in R$, there is  $x\in R$ such that $axa=a$;
in particular, for any $a,b \in R$ there is $c$ such that $aR+bR=cR$
and for any $a \in R$ there are (unique) projections $e,f$
such that $aR=eR$ and $Ra=Rf$.
The set $P(R)$ of projections is partially ordered by
 \[e \leq f \Leftrightarrow fe=e \Leftrightarrow ef=e.\]
If $e\in P(R)$  then 
$eRe$  is a $*$-regular 
ring with unit $e$, a $*$-subring of $R$ if unit is not considered.
 In case of $*$-rings
with unit, $eRe$ is  
a homomorphic image of the regular $*$-subring  $eRe+(1-e)R(1-e)$ of $R$,
isomorphic to
$eRe \times (1-e)R(1-e)$.

Considering $*$-rings $R$ with unit,
$R$ is \emph{unit-regular} 
if for any $a \in R$ there is invertible $u\in R$
(a \emph{unit quasi-inverse}) such that
$aua=a$. $R$ is $*$-\emph{clean}
if for any $a \in R$ there are an invertible $u$ and a 
projection $e$ in $R$ such that $a=u+e$.
$R$ is \emph{directly finite}
if $xy=1$ implies $yx=1$; this is implied by unit-regularity.

For the remainder of this Section and
Sections~\ref{tyu} and \ref{sim}, $R$ will be a $*$-regular ring with or without unit;
otherwise, unit is included into the signature.
Recall that  for any $a \in R$ 
there is a [Moore-Penrose]
\emph{pseudo-inverse}
(or Rickart relative inverse) $a^+ \in R$, that is 
\[a=aa^+a,\quad a^+=a^+aa^+,\quad(aa^+)^\ast=aa^+,\quad (a^+a)^\ast=a^+a\]
cf. \cite[Lemma 4]{Kap}.
In this case, $a^+$ is uniquely determined by $a$
(and it follows $(a^*)^+=(a^+)^*$). 
Thus, a sub-$*$-ring $S$ of a $*$-regular ring $R$ 
is $*$-regular if and only if it is closed under
the operation $a \mapsto a^+$.
Obviously, pseudo-inversion is 
compatible with surjective homomorphisms and direct products.
Thus,
 including pseudo-inversion into the signature,
$*$-regular rings form a variety
and we speak of varieties of $*$-regular rings. 
$\exists$-varieties of $*$-regular rings
(cf \cite{PartI,act})
are just the $*$-ring reducts of the latter.
From \cite[Proposition 9]{PartI} one has
the following useful property of pseudo-inversion.
\begin{itemize}
\item[(1)] For any $a,e \in R$, $e$ a projection,
if $ae=0$ then $(a^+)^*e=0$ and if $a^*e=0$ then
$a^+e=0$.
\end{itemize}

Observe  that any ideal $I$ of $R$ 
is generated by the set $P(I)$ of projections in $I$,
more precisely
\[(2)\quad I= \bigcup_{e \in P(I)} eR = \bigcup_{e \in P(I)} Re =\{a^*\mid a \in I\}.\]
Therefore, the congruence lattice of $R$ is isomorphic to 
the lattice of ideals of the ring reduct of $R$.
In particular,  $R$ is subdirectly irreducible 
if and only if it admits a unique minimal ideal,
to be denoted by $J$.

Since in a regular ring
the sets of right (left) ideals are closed under  sums,
it follows from (2)
\begin{itemize}
\item[(3)] For any ideal $I$,
given  finite $A\subseteq I$, 
there is $e \in P(I)$ such that $fa=a=af$ for all $a \in A$
and $f\geq e$.
\end{itemize}

A $*$-regular ring   $R$ is \emph{artinian} if there is a finite
bound on the length of chains of projections; equivalently
the ring $R$ is right (left) artinian; in particular, if $S$ is
a $*$-subring, closed under $a \mapsto a^+$, of
  $\prod_{i=1}^n R_i$ with artinian $R_i$ then
$S$ is artinian.
Artinian $*$-regular rings are
semisimple whence  both
unit-regular and $*$-clean
(cf. \cite[Theorem 4.1]{good} and \cite[Proposition 4]{vas}).

\section{Tyukavkin approximation revisited}

In order to have the method available in various contexts,
we define the notion of approximation setup
(both for the case with and without unit) and
also for $*$-$\Lambda$-algebras.
Recall from \cite{act} that the latter are
$\Lambda$-algebras $R$ where $\Lambda$ is a commutative $*$-ring
and $(\lambda a)^*=\lambda^*a^*$
for all $\lambda \in \Lambda$ and $a \in R$. 
Any $*$-ring is a $*$-$\mathbb{Z}$-algebra.
 
An \emph{approximation setup} 
now consists of $*$-regular $*$-$\Lambda$-algebras
$R$ and $T$, $R$ a $*$-$\Lambda$-subalgebra of $T$,
 a set $P$ of projections of $T$,
and a filter $\mathcal{F}$ on $P$ such that $\emptyset \not\in \mathcal{F}$
and  $ \{p\in P\mid p \geq e\}\in\mathcal{F}$  for all $e \in P$.
Moreover, we require the following
\begin{itemize}
\item[(a)] For all $a\in R$, if $ae=0=ea$ for all $e \in P$
then $a=0$.
\item[(b)] For all $a,b \in R$ and $e \in P$ there is $f \in P$, $f \geq e$,
such that $ea=eaf$ and $be=fbe$.
\end{itemize}
Now, define $A_e=eTe$ and $A=\prod_{e \in P}A_e$,
which are $*$-regular $*$-$\Lambda$-algebras. For 
 $a \in R$ and $\alpha= (a_p\mid p \in P) \in A$ define
\[ a \sim
\alpha 
\Leftrightarrow \forall e \in P.\, \exists X \in \mathcal{F}.\,\forall p \in X.\;
e a=e a_p\; \&\; ae=a_pe.\]

\begin{lemma} \label{tyu} 
$S=\{ \alpha \in A\mid \exists a \in R.\: a \sim  \alpha\}$
 is a $*$-regular $*$-$\Lambda$-subalgebra of $A$  and there is a surjective homomorphism $\varphi:S\to R$
such that $\varphi(\alpha)=a$ if and only if $a \sim \alpha$. 
Moreover, for 
the canonical homomorphism $\psi$
from $A$ onto the reduced product $A/\mathcal{F}$
there is a surjective homomorphism $\chi: A/\mathcal{F}\to R$ 
such that $\varphi=\chi \circ \psi$.
\end{lemma}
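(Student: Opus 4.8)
The plan is to let $\phi$ be the relation $\sim$ read as a function, prove it is a well-defined surjective homomorphism of a sub-$*$-$\Lambda$-algebra $S$ of $A$ onto $R$, and finally that $S$ is closed under pseudo-inversion. For well-definedness: if $a\sim\alpha$ and $a'\sim\alpha$, then for each $e\in P$ the two sets in $\mc{F}$ witnessing the respective agreements meet (as $\emptyset\not\in\mc{F}$), so $ea=ea'$ and $ae=a'e$; by (a) this forces $a=a'$, so $\phi(\alpha):=a$ is well-defined on $S$. For surjectivity, given $a\in R$ put $a_p:=pap\in A_p$. Fixing $e$, choose by (b) some $f\geq e$ in $P$ with $ea=eaf$ and $ae=fae$; then for every $p\geq f$ one computes $ea_p=(ep)ap=(ea)p=(eaf)p=eaf=ea$ and $a_pe=pa(pe)=p(ae)=p(fae)=fae=ae$, and since $\{p\geq f\}\in\mc{F}$ this gives $a\sim(a_p)_{p\in P}$. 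Hence $\phi$ is onto.

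Next I would prove the preservation statements, which simultaneously yield that $S$ is closed under the operations and that $\phi$ is a homomorphism: from $a\sim\alpha$, $b\sim\beta$ one gets $a+b\sim\alpha+\beta$, $\lambda a\sim\lambda\alpha$, $a^*\sim\alpha^*$, $ab\sim\alpha\beta$, and (in the unital case) $1\sim(p)_{p\in P}$, $0\sim 0$. Addition, scalar multiplication, and the constants are immediate by intersecting the witnessing sets; the $*$-case follows because applying $*$ to $ae=a_pe$ gives $ea^*=ea_p^*$ (and symmetrically), so $a^*\sim\alpha^*$ with the \emph{same} sets. The multiplicative case is where (b) is essential: fixing $e$, take $f\geq e$ from (b) with $ea=eaf$ and $be=fbe$; then on the intersection of the set witnessing $b\sim\beta$ at $f$ with the one witnessing $a\sim\alpha$ at $e$ one has $e(ab)=(eaf)b=ea(fb_p)=(ea)b_p=(ea_p)b_p=e(a_pb_p)$, and dually $(ab)e=(a_pb_p)e$ using $be=fbe$, $af=a_pf$, $be=b_pe$. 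Thus $ab\sim\alpha\beta$, so $S$ is a sub-$*$-$\Lambda$-algebra of $A$ and $\phi$ a surjective homomorphism.

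For $*$-regularity, note $A=\prod_{p\in P}A_p$ is $*$-regular, so by the criterion in Section~\ref{pre} it remains to show $S$ is closed under the (componentwise) pseudo-inversion of $A$, i.e.\ $a\sim\alpha\Rightarrow a^+\sim\alpha^+$ where $\alpha^+=(a_p^+)_{p\in P}$. Since $\phi$ already respects $+,\cdot,{}^*$ and the four Moore--Penrose equations hold componentwise in $A$, any $c\in R$ with $c\sim\alpha^+$ would satisfy, via $\phi$, those equations relative to $a$, hence equal $a^+$; so it suffices to prove $a^+\sim\alpha^+$. \textbf{This I expect to be the main obstacle.} It is enough to establish the right compressions $a^+e=a_p^+e$ on a set in $\mc{F}$, for then applying the same to $a^*$ and using $(a^+)^*=(a^*)^+$, $a^*\sim\alpha^*$ and intersecting filter sets yields the left compressions $ea^+=ea_p^+$, hence $a^+\sim\alpha^+$. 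The tool for the right compressions is property (1): fixing $e$ and using (b) to capture the relevant support of $a$ and $a^*$ within some $f\geq e$, property (1) bounds the one-sided annihilators of $a_p^+$ by those of $a_p,a_p^*$, which $\sim$ matches with those of $a,a^*$; the technical heart is to conclude from this that the part of $a_p$ lying outside the captured region cannot affect $a_p^+$ near $e$, forcing $a^+e=a_p^+e$ on a set $X\in\mc{F}$ with $X\subseteq\{p\geq f\}$.

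Finally, for the \emph{moreover} statement, observe that whether $a\sim\alpha$ holds depends on $\alpha$ only through $\psi(\alpha)$: if $\psi(\alpha)=\psi(\beta)$, that is $\{p\mid a_p=b_p\}\in\mc{F}$, then intersecting this set with the sets witnessing $a\sim\alpha$ gives $a\sim\beta$. Consequently $\ker(\psi|_S)\subseteq\ker\phi$, so $\phi$ factors as $\phi=\chi\circ\psi$ with $\chi$ defined on $\im(\psi|_S)$ by $\chi(\psi(\alpha))=\phi(\alpha)$; this $\chi$ is a well-defined surjective homomorphism onto $R$ because $\psi$ is a surjective homomorphism onto its image and $\phi$ is a homomorphism.
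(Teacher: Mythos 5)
Your handling of well-definedness (via (a)), surjectivity (via $a_p=pap$ and (b)), the homomorphism property (your multiplication computation with the witnessing sets is exactly the paper's), and the factorization $\phi=\chi\circ\psi$ is correct and essentially identical to the paper. The gap is precisely the step you flagged as "the main obstacle": you never prove that $a\sim\alpha$ implies $a^+\sim\alpha^+$, and the heuristic you offer for it -- fix $e$, capture the relevant support of $a,a^*$ inside some $f\geq e$ via (b), then use (1) to conclude $a^+e=a_p^+e$ -- cannot work, because pseudo-inversion is not local in this sense. In $M_3(\mathbb{R})$ with transposition, let $e=E_{11}$, $f=E_{11}+E_{22}$, and
\[ a=\begin{pmatrix}1&1&0\\1&1&1\\0&1&1\end{pmatrix},\qquad b=\begin{pmatrix}1&1&0\\1&1&1\\0&1&0\end{pmatrix}.\]
Both are symmetric, $ea=eaf$ and $ae=fae$ (the capture condition from (b) holds with this $f$), and $b$ agrees with $a$ two-sidedly at $f$, hence also at $e$, adjoints included; yet both are invertible, so $a^+=a^{-1}$, $b^+=b^{-1}$, and the first columns of $a^{-1}$ and $b^{-1}$ are $(0,1,-1)^T$ and $(1,0,-1)^T$ respectively, so $a^+e\neq b^+e$. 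Thus agreement at $e$ and at a capturing $f$, plus the annihilator information (1), does not determine the compression of the pseudo-inverse at $e$; any correct proof of your claim must exploit agreement at arbitrarily large projections along the filter simultaneously. (Indeed the claim is true, but the easy route to it is a posteriori, after one already knows $S$ is $*$-regular -- which makes your intended order of argument circular.)

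The paper avoids this obstacle altogether, and you should too: it never shows $S$ is closed under pseudo-inversion. Instead it invokes \cite[Lemma 1.3]{good}, by which a ring is regular provided some ideal and the quotient modulo it are both regular. Since $S/\ker\phi\cong R=\im\phi$ is regular, it suffices that $\ker\phi$ be regular, and for that it suffices that $\ker\phi$ be closed under the pseudo-inversion of $A$ -- i.e.\ your claim in the single case $a=0$, where the exact annihilation hypotheses of (1) are available: $0\sim\alpha$ gives, for each $e$, a set $X\in\mc{F}$ with $a_pe=0=a_p^*e$ for $p\in X$, whence by (1) $a_p^+e=0=(a_p^+)^*e$, i.e.\ $0\sim\alpha^+$; so each $\alpha\in\ker\phi$ has its quasi-inverse $\alpha^+$ in $\ker\phi$. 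Then $S$ is regular, and being a $*$-subring of the proper ring $A$ it is proper, hence $*$-regular; closure of $S$ under ${}^+$ and preservation of ${}^+$ by $\phi$ follow for free afterwards. Everything else in your write-up can stand; only this replacement for your closure-under-${}^+$ step is needed to make the proof complete.
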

\begin{proof}
Consider $a\sim \alpha= (a_p\mid p \in P)$ and
$b\sim \beta=(b_p\mid p \in P)$  and $e \in P$. 
Choose $f$ according to (b).
Choose $X_1,Y_1$ in $\mathcal{F}$ witnessing $a\sim \alpha$ for $e$  respectively 
$b\sim \beta$ for $f$;
choose $X_2,Y_2$ in $\mathcal{F}$ witnessing $b\sim \beta$ for $e$  respectively 
$a\sim \alpha$ for $f$.
 Put $Z=X_1\cap Y_1\cap X_2\cap Y_2$. Then one has for all $p\in Z$
\[\begin{array}{ccccccccc}eab &=&eafb &=&eafb_p&=&eab_p&=&ea_pb_p\\
abe&=& afbe&=&a_pfbe&=&a_pbe&=&a_pb_pe. \end{array}\]
 This shows $ab\sim \alpha \beta$.
Closure of $S$ under the other fundamental $*$-$\Lambda$-algebra operations
is even more obvious.
 In case of  unit (as constant) one has
$1 \sim \alpha$, $\alpha=(p\mid p \in P)$ the unit of $A$,
witnessed by $X=\{ p \in P\mid p\geq e\}$ for $e \in P$.
In view of (a), $\varphi$ is a well defined map.
To prove 
surjectivity of $\varphi$, given $a \in R$ put $a_p=pap$ and
$\alpha=(a_p\mid p \in P)$. Now given $e$,
choose $f$ according to (b) with $a=b$,
and $X=\{p \in P\mid p \geq f\}$; then
$ea=eap =epap=ea_p$ and,
similarly, $ae=a_pe$  for all $p \in X$. This proves
$a \sim \alpha$.

To prove regularity of $S$, having $\operatorname{im} \varphi=R$ regular,
 it suffices to
show that ${\sf ker} \varphi$ is regular
(see \cite[Lemma 1.3]{good}). As in the proof of
Assertion 22 in \cite{PartI} we will show that 
${\sf ker} \varphi$ is closed under pseudo-inversion in $A$. 
Consider 
$0 \sim \alpha =(a_p\mid p \in P)$. For any $e \in P$
there is $X\in \mathcal{F}$ such that $a_pe=0=a_p^*e$
for all $p \in X$ and
  by (1) it follows $(a_p^+)^*e=0=a_p^+e$
for all $p \in X$. Thus,  $0 \sim \alpha^+$.
The last claim folllows from ${\sf ker} \psi \subseteq {\sf ker}\varphi$. 
\end{proof}

\section{Simple generators}\label{sim}

\begin{lemma}\label{l1}
For subdirectly irreducible $R$  and
 any $0\neq a\in R$  there is $e \in P(J)$, $J$ the minimal ideal of $R$,
 such that $eae\neq 0$; also, for any nonzero $e \in P(J)$, $eRe$ is a simple
$*$-regular ring with unit $e$.
\end{lemma}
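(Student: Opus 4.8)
The plan is to prove the two assertions of Lemma~\ref{l1} in turn, using the structure theory of subdirectly irreducible $*$-regular rings established in the Preliminaries, in particular properties (2) and (3) together with the characterization of the minimal ideal $J$.

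For the first assertion, I would argue as follows. Since $R$ is subdirectly irreducible, it has a unique minimal ideal $J$, so every nonzero ideal of $R$ contains $J$. Given $0 \neq a \in R$, the two-sided ideal $I$ generated by $a$ is nonzero, hence $J \subseteq I$, so in particular $I$ itself is a nonzero ideal and must meet $J$ nontrivially. The cleaner route, however, is to consider the ideal $J$ directly and relate it to $a$. I expect the correct move to be: the ideal generated by $a$ contains a nonzero projection $e \in P(J)$ by property (2), since $J$ is contained in that ideal and $J$ is generated by its projections. Now I want to find $e \in P(J)$ with $eae \neq 0$. Suppose for contradiction that $eae = 0$ for \emph{every} $e \in P(J)$. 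I would then try to show this forces $aJ = 0 = Ja$ (using that every element of $J$ is of the form $ex$ or $xe$ for projections $e \in P(J)$ via (2)), which would make the annihilator of $a$ — or rather the set of elements killed on both sides — a nonzero ideal avoiding the action of $a$, contradicting either properness or the uniqueness of $J$. The technical heart is converting "$eae = 0$ for all $e \in P(J)$" into a statement about $aJa$ or about an ideal that must contain $J$ yet is annihilated, yielding a contradiction with subdirect irreducibility.

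For the second assertion, that $eRe$ is simple for every $e \in P(J)$, the plan is to use that $J$ is the minimal ideal. Recall that $eRe$ is always a $*$-regular ring with unit $e$ (stated in the Preliminaries). To show simplicity, I would take a nonzero ideal $K$ of $eRe$ and show $K = eRe$. The standard technique is to relate ideals of the corner ring $eRe$ to ideals of $R$ via the correspondence $K \mapsto RKR$ (the ideal of $R$ generated by $K$). Since $e \in J$ and $J$ is the minimal ideal, one shows $RKR$, being a nonzero ideal of $R$, must contain $J$, hence contains $e$; then $e = ee e \in e(RKR)e$, and a computation using $K \subseteq eRe$ and the regularity/projection structure should yield $e \in K$, forcing $K = eRe$. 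Here I would lean on property (3) to produce projections absorbing finitely many elements, and on (2) to express elements of $RKR$ through projections in $J$.

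The main obstacle I anticipate is the first assertion's contradiction step: cleanly deducing from $eae = 0$ for all $e \in P(J)$ that $a$ annihilates $J$ on both sides, and then that this contradicts subdirect irreducibility (or properness). The subtlety is that $a$ need not lie in $J$, so I cannot directly use the decomposition (2) for $a$ itself; instead I must track how $a$ interacts with the projections of $J$ through the corner rings $eTe$-style localizations, and ensure the resulting annihilated ideal is genuinely nonzero and genuinely an ideal. Properness of $R$ (i.e., $xx^* = 0 \Rightarrow x = 0$) is the natural tool to pin down that the relevant element vanishes, so I expect the argument to route through $eae = 0$ for all $e$, then through $(eae)(eae)^* = 0$ or a similar proper-$*$-ring identity, to conclude $a$ acts trivially and derive the contradiction.
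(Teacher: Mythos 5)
Your overall strategy for the first assertion --- assume $eae=0$ for all $e\in P(J)$, deduce $aJ=0=Ja$, and contradict minimality of $J$ --- can in fact be completed, but the one step you defer (and correctly identify as the technical heart) is left genuinely open, and the tool you nominate for it fails. Properness gets no grip here: $(eae)(eae)^*=0$ is vacuous (it is $0\cdot 0^*$), and to conclude $ae=0$ from properness you would need $(ae)(ae)^*=aea^*=0$ or $(ae)^*(ae)=ea^*ae=0$, neither of which follows from $eae=0$ (these concern $aea^*$ resp.\ $a^*a$, about which you know nothing). Likewise (2) alone, as you suggest, only turns $ax$ for $x\in J$ into $aex$ and leaves you needing $ae=0$ --- the very point at issue. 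What closes the gap is property (3), which you invoke only for the second assertion: given $e\in P(J)$, note $ae,ea\in J$ and choose by (3) a projection $f\in P(J)$, $f\geq e$, with $f(ae)=ae$ and $(ea)f=ea$. Since $e=fe=ef$,
\[ ae=f(ae)=fa(fe)=(faf)e=0,\qquad ea=(ea)f=(ef)af=e(faf)=0. \]
Then (2) gives $aJ=0=Ja$, so the two-sided annihilator $\{b\in R\mid bJ=0=Jb\}$ is a nonzero ideal (it contains $a$), hence contains the unique minimal ideal $J$; thus $J^{2}=0$, contradicting that $J$ contains a nonzero projection $e=e^{2}$. Without this absorption argument (or an equivalent use of (3)), your proof of the first assertion does not go through.

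For comparison: the paper avoids contradiction altogether. Since the ideal generated by $a\neq 0$ contains $J$, a fixed nonzero $f\in P(J)$ can be written $f=\sum_i fr_ias_if$; by (3) there is $e\in P(J)$ with $fr_ie=fr_i$ and $es_if=s_if$ for all $i$, whence $f=\sum_i fr_i(eae)s_if$ and therefore $eae\neq 0$. Your plan for the second assertion (take a nonzero ideal $K$ of $eRe$, observe $RKR\supseteq J\ni e$, and compute $e=\sum_i(er_ie)k_i(es_ie)\in K$ using $k_i=ek_ie$) is essentially the paper's own computation and is correct; note it needs neither (3) nor regularity beyond the fact that the ideal of $R$ generated by a nonzero element contains $J$. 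So: repair the first part by replacing the properness appeal with the (3)-absorption step above, or adopt the paper's direct version; as it stands, the proposal has a real hole exactly where you predicted trouble.
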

\begin{proof} 
As $R$ is subdirectly irreducible,
for any $a \neq 0$ the ideal $I$ generated by 
$a$ contains $J$;
that is, for any $f \in P(J)$ one has 
 $f=\sum_i r_ias_i$
 whence $f=\sum_i fr_ias_if$ for suitable $r_i,s_i \in R$. 
Now, choosing $f\neq 0$,
by (3) there is $e\in P(J)$
such that $fr_ie=fr_i$  and  $ es_if=s_if $ for all $i$
whence $eae \neq 0$. 
By the same token,
for  $0\neq a \in eRe$, where $0\neq e \in P(J)$,
one has   $e=\sum_i r_ias_i$ 
 whence $e= \sum_i er_ieaes_ie$
is in the ideal of $eRe$ generated by $a$.
\end{proof}

\begin{theorem}\label{theorem} 
Any   subdirectly irreducible
$*$-regular ring $R$ $($with or without unit$)$ 
 is a homomorphic image of a $*$-regular $*$-subring
 of an ultraproduct of $*$-rings $eRe$, where $e$
is a nonzero projection in the minimal ideal $J$.
In particular, any variety of $*$-regular rings is generated by its
simple members.
\end{theorem}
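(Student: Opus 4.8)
The plan is to apply the Tyukavkin approximation machinery (Lemma~\ref{tyu}) to a carefully chosen setup, then derive the ``generated by simple members'' consequence. For a subdirectly irreducible $R$ with minimal ideal $J$, the natural choice is to take $P = P(J)$, the set of projections in the minimal ideal, and $T = R$ itself (so $R$ is trivially a $*$-subalgebra of $T$). The filter $\mc{F}$ should be generated by the up-sets $\{p \in P \mid p \geq e\}$ for $e \in P$; I would need to check that these up-sets have the finite-intersection property, which follows from property~(3): given finitely many $e_i \in P(J)$, there is a single $f \in P(J)$ dominating them all, so the corresponding up-sets intersect nontrivially. Thus $\emptyset \notin \mc{F}$.

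**Next I would verify** the two axioms (a) and (b) of an approximation setup. Axiom~(b), requiring for each $a,b \in R$ and $e \in P$ some $f \geq e$ in $P$ with $ea = eaf$ and $be = fbe$, is exactly property~(3) applied to the finite set of projections generated by $ea$ and $be$ inside $J$ (using that $J$ is an ideal, so these elements lie in $J$). Axiom~(a), saying that $ae = 0 = ea$ for all $e \in P$ forces $a = 0$, is precisely the content of Lemma~\ref{l1}: if $a \neq 0$ then some $e \in P(J)$ has $eae \neq 0$, and tracing through, $ae$ or $ea$ cannot all vanish. With the setup validated, Lemma~\ref{tyu} hands me a $*$-regular $*$-subring $S$ of $A = \prod_{e \in P} eRe$ together with a surjection $\phi: S \to R$.

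**The remaining point** is to pass from the full product $A$ to an \emph{ultraproduct}. The last sentence of Lemma~\ref{tyu} gives a surjection $\chi: \im\psi \to R$ factoring $\phi$ through the reduced product $A/\mc{F}$. To get an ultraproduct I would extend $\mc{F}$ to an ultrafilter $\mc{U}$ on $P$; then $A/\mc{U}$ is an ultraproduct of the simple rings $eRe$ (simplicity of each $eRe$ is the second assertion of Lemma~\ref{l1}), and the image of $S$ under the canonical map $A \to A/\mc{U}$ is a $*$-regular $*$-subring mapping onto $R$. I expect the mild technical care here --- confirming that composing with the further quotient $A/\mc{F} \to A/\mc{U}$ preserves the surjection onto $R$ and that the relevant image is closed under pseudo-inversion --- to be the main obstacle, though it is essentially routine given the factorization already established.

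**For the final clause**, the ``in particular'' statement follows formally: every variety is generated by its subdirectly irreducible members (Birkhoff), each such $R$ is now a homomorphic image of a $*$-regular $*$-subring of an ultraproduct of the simple rings $eRe$, and both ultraproducts and $*$-regular $*$-subrings (closure under pseudo-inversion being part of the signature) stay inside the variety generated by those simple rings; homomorphic images do too. Hence the simple members already generate the whole variety.
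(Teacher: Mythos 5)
Your proposal is correct and takes essentially the same route as the paper: apply Lemma~\ref{tyu} with $T=R$ and $P\subseteq P(J)$, generate the filter from the up-sets $\{p\in P\mid p\geq e\}$ (finite intersection property via (3)), verify (a) and (b) from Lemma~\ref{l1} and (3), and then pass to an ultraproduct. The paper eliminates your ``main obstacle'' (re-checking the factorization after enlarging to $\mc{U}$) by simply choosing $\mc{F}$ to be an ultrafilter containing the up-sets from the outset --- nothing in the definition of an approximation setup forces $\mc{F}$ to be the filter they generate, so Lemma~\ref{tyu} already lands in the desired ultraproduct; note also that the paper takes $P$ cofinal in $P(J)$ with $0\notin P$, a detail your choice $P=P(J)$ misses, since $0R0$ is the zero ring and hence not simple.
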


\begin{proof}
We apply Lemma~\ref{tyu} with $A= \prod_{e\in P}$
where $P$ is a  cofinal subset  of the set $P(J)$ of projections
in the minimal ideal $J$ such that  $0 \not\in P$.
Since the sets $\{p \in P\mid p\geq e\}$, $e\in P(J)$,
form a filter base, 
there is an ultrafilter $\mathcal{F}$ 
on $P$ containing all these. 
This provides an approximation setup since
conditions  (a) and (b) are satisfied in view of Lemma~\ref{l1} and (3).
\end{proof}

Let 
$C$ be the center of the subdirectly irreducible
$*$-regular rings  $R$.
and $A$ the directed union of the 
$gRg+C(1-g)$ where $C$ is the center of $R$.
Consider $R$ with designated subset $A$, denoted as $(R;A)$.
In analogy to the proof of  \cite[Theorem 10.1]{act}
(where $A=\hat{J}(V_F)$)   
choose  an $\omega$-saturated elementary extension $(\hat{R};\hat{A})$ of $(R;A)$ (such an extension  exists, cf. \cite[Corollary 4.3.1.4] {CK}).
Also, choose  $S=\{a \in \hat{A}\mid \exists r \in R.\, a \sim r\}$
where $a \sim r$ if $ae =re$ and
$a^*e=r^*e$ for all $e \in P(J)=:J_0$.  

\begin{lemma}
Then $S$ is a $+$-regular
 $*$-subring of $\hat{A}$ and
$a \mapsto r$ for $a \sim r$ a homomorphism $\varphi$ of $S$ onto $R$.
\end{lemma}
\begin{proof}
This follows as   Claims 1--3 in the proof of \cite[Theorem 10.1]{act},
if  reference to Propositions 2 and 4.4(iv) in \cite{act}
is replaced by that to Lemma \ref{l1} and (3).
Moreover, in view of (1), $\ker \varphi$
is closed under pseudo-inversion in $\hat{R}$.
By \cite[Lemma 1.3]{good} it follows that 
 $S$ is regular, whence $*$-regular.
\end{proof}

\section{Unit-regular extensions}\label{uni}

\begin{theorem}\label{subd}
Every subdirect product $R$ of artinian $*$-regular rings 
has a unit-regular and $*$-clean extension within its variety.
\end{theorem}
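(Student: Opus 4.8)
The plan is to take the full direct product of the artinian factors as the required extension. The key observation is that unit-regularity and $*$-cleanness, unlike artinianness, are inherited by arbitrary products, while membership in the variety is forced by subdirectness; so no delicate approximation is needed here, only a check that the two target properties localise to coordinates and reassemble.

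First I would fix a subdirect representation $R \le U := \prod_{i\in I}R_i$ with each $R_i$ artinian $*$-regular. Since the representation is subdirect, each coordinate projection restricts to a surjective $*$-ring homomorphism $R \to R_i$, which by the compatibility of pseudo-inversion with surjections (Section~\ref{pre}) is a morphism in the full signature; hence every $R_i$ lies in the variety generated by $R$, and, varieties being closed under products, so does $U$. As the pseudo-inverse is uniquely determined by its defining equations, the pseudo-inverse computed in $R$ coincides with the one computed in $U$, so $R$ is a $*$-subring of $U$ closed under $a\mapsto a^+$. Thus the inclusion $R \hookrightarrow U$ is an embedding in the variety, and $U$ is an extension of $R$ within it.

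Next I would verify that $U$ is unit-regular and $*$-clean. Each artinian $R_i$ is semisimple, hence both unit-regular and $*$-clean (Section~\ref{pre}). For unit-regularity, given $a=(a_i)\in U$ I choose in each coordinate an invertible $u_i\in R_i$ with $a_iu_ia_i=a_i$; then $u=(u_i)$ is invertible in $U$, with inverse $(u_i^{-1})$, and $aua=a$. For $*$-cleanness, I write $a_i=v_i+e_i$ with $v_i$ invertible and $e_i$ a projection; then $v=(v_i)$ is a unit of $U$, $e=(e_i)$ a projection, and $a=v+e$. Hence $U$ is a unit-regular and $*$-clean extension of $R$ inside its variety.

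The one point deserving attention—indeed the place where an infinite product could a priori have failed—is that the coordinatewise data really assemble to global structure: a tuple $(u_i)$ is invertible in the direct product exactly when each $u_i$ is invertible in $R_i$, with no uniform constraint, so the componentwise units and projections do yield a genuine unit and a genuine projection of $U$. This is precisely the feature distinguishing unit-regularity and $*$-cleanness (which survive the product) from artinianness (which does not). If a tighter extension were wanted, one could instead feed $T=U$ into the approximation setup of Lemma~\ref{tyu} to obtain a $*$-regular subring of a product of the $eRe$ mapping onto $R$; but for the bare existence assertion the full product $U$ already does the job.
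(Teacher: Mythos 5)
Your proof is correct, and it takes a genuinely different --- and much shorter --- route than the paper's. You take the full direct product $U=\prod_{i\in I}R_i$: subdirectness makes each $R_i$ a surjective image of $R$ in the full signature (uniqueness of Moore--Penrose inverses makes any surjective $*$-ring homomorphism between $*$-regular rings preserve $a\mapsto a^+$), so closure of varieties under homomorphic images and products puts $U$ in the variety of $R$, and the same uniqueness argument makes $R$ a subalgebra of $U$; unit-regularity and $*$-cleanness then pass from the factors to $U$ because both are conditions of the form $\forall\ldots\exists\ldots(\text{conjunction of equations})$, so the witnesses (units, projections) can be chosen coordinatewise and reassembled, units and projections of a product being exactly the tuples of units and projections. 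The paper argues quite differently: it forms an $\omega$-saturated elementary extension $(\tilde R;\tilde I_k(k\in K))$ of $(R;I_k(k\in K))$, uses saturation to realize, for each $r\in R$, a unit quasi-inverse and a $*$-clean decomposition inside $\tilde R_K=\tilde R/\bigcap_{k\in K}\tilde I_k$ --- which is again a subdirect product of artinians containing $R$ and lying in the same variety --- and then iterates this one-step extension countably often, taking the union of the resulting chain. What the heavier machinery buys is precisely the remark following the theorem: the same conclusion for \emph{any} first order sentence of the form $\forall\ldots\forall\exists\ldots\exists$ valid in all artinian $*$-regular rings. Such sentences survive the paper's chain construction (quantifier-free formulas are absolute between a structure and its extensions) but are not in general preserved by infinite direct products; your argument exploits the special, purely equational matrix of unit-regularity and $*$-cleanness, so it proves the theorem as stated but not that generalization. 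One small caveat on your closing aside: feeding $T=U$ into Lemma~\ref{tyu} would not give a ``tighter extension,'' since that lemma produces a $*$-regular ring mapping \emph{onto} $R$, not an extension of $R$; but as this is outside your actual proof, it does not affect correctness.
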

The same holds for each first order $\Pi_1$- sentence
(that is, prenex with quantification of the form $\forall \ldots \forall \exists
\ldots \exists$) which is valid in all artinian $*$-regular rings.
\begin{proof}
By hypothesis, there are ideals $I_k$ of $R$, $k \in K$,
such that each $R/I_k$ is artinian and $\bigcap_{k \in K} I_k=0$.
The first order structure
 $(R;I_k(k\in K))$ 
has a  $\omega$-saturated elementary extension
 $(\tilde{R},\tilde{I}_k(k\in K))$.
In particular, 
each  $*$-regular ring $\tilde{R}/\tilde{I}_k$ is artinian
(having the same finite bound on the length of chains of projections
as does $R/I_k$).
For $F\subseteq K$ put 
$\tilde{I}_F=\bigcap_{k\in F}\tilde{I}_k$ and 
$\tilde{R}_F:=\tilde{R}/\tilde{I}_F$.
It follows that, for any finite $F \subseteq K$
the subdirect product $\tilde{R}_F$
is also artinian, whence unit-regular.
On the other hand, $\tilde{R}_K$ 
is a subdirect product of artinians and
 $R$ embeds into $\tilde{R}_K$,
canonically, since $\tilde{I}_k \cap R=I_k$ for all $k \in K$.

Now, fix $r \in R$ and consider the following set $\Sigma$ of formulas
with parameter $r$ and variables $x,y$
\[  \{I_k(xy-1)\; \&\;I_k(yx-1)\; \&\;I_k(rxr-r)  \mid k \in K\}. \]
Given finite $\Sigma_0 \subseteq \Sigma$ there is
finite $F  \subseteq K$ such that predicate symbols occurring in $\Sigma_0$
have $ k \in F$. Thus, $\Sigma_0$ is satisfied substituting,
for $x,y$, elements
$u,s \in \tilde{R}$ witnessing that $r+\tilde{I}_F$
has unit quasi-inverse $u$ in $\tilde{R}_F$.
By saturation there are $u,s \in \tilde{R}$
satisfying $\Sigma$; that is, $u$ is a unit quasi-inverse of $r$
in $\tilde{R}_K$. By the same  approach one
obtains unit $v$ and projection $e$ in $\tilde{R}_K$
such that $r=v+e$. 

To summarize, any subdirect product $R_n$
of artinians has an extension $R_{n+1}$, within the variety of $R_n$,
which is  a subdirect product of artinians and 
such that for every $r \in R_n$ there
are units $u,v$ and projection $e$ in $R_{n+1}$
with $r=rur$ and $r=v+e$. Thus, starting with $R=R_0$,
the directed union $\bigcup_{n<\omega}R_n$ is the required 
extension.
\end{proof}

\begin{corollary}
Within any variety $\mathcal{V}$ generated by artinian 
$*$-regular rings, every member is a homomorphic image
of a $*$-regular $*$-subring of some unit-regular and
$*$-clean member of $\mathcal{V}$.
\end{corollary}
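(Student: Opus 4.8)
The plan is to unwind the definition of the variety generated by a class and then feed a suitable subalgebra into Theorem~\ref{subd}. Since $*$-regular rings (with pseudo-inversion included in the signature) form a variety, the HSP theorem gives $\mc{V}=\mathbf{H}\mathbf{S}\mathbf{P}(\mc{K})$, where $\mc{K}$ is the class of artinian $*$-regular rings generating $\mc{V}$. Thus a given member $R$ of $\mc{V}$ can be written as $R=\phi(S)$ for a surjective homomorphism $\phi$ and a subalgebra $S$ of a product $\prod_{i\in I}A_i$ of artinian members $A_i\in\mc{K}$. Because $S$ is a subalgebra in the full signature, it is closed under $a\mapsto a^+$, whence $S$ is a $*$-regular $*$-subring of $\prod_i A_i$ by the criterion recalled in Section~\ref{pre}.

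The middle step, and the one demanding a little care, is to recognize $S$ as a \emph{subdirect} product of artinians, so that Theorem~\ref{subd} applies to $S$. To this end let $S_i$ denote the image of $S$ under the coordinate projection $\prod_i A_i\to A_i$. Each projection is a homomorphism, so $S_i$ is a $*$-subring of $A_i$ closed under pseudo-inversion, hence $*$-regular; moreover every chain of projections of $S_i$ is a chain of projections of the artinian ring $A_i$, so $S_i$ is itself artinian. Writing $I_i=\ker(S\to S_i)$, the embedding $S\hookrightarrow\prod_i S_i$ forces $\bigcap_i I_i=0$, while each $S/I_i\cong S_i$ is artinian; thus $S$ is a subdirect product of the artinian $*$-regular rings $S_i$.

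Finally I would invoke Theorem~\ref{subd} on $S$: it produces a unit-regular and $*$-clean extension $T$ of $S$ lying within the variety generated by $S$. As $S\in\mc{V}$ and $\mc{V}$ is closed under $\mathbf{H}$, $\mathbf{S}$, $\mathbf{P}$, the variety generated by $S$ is contained in $\mc{V}$, so $T\in\mc{V}$. Since $S$ is a $*$-regular $*$-subring of $T$ and $R=\phi(S)$ is a homomorphic image of $S$, we conclude that $R$ is a homomorphic image of a $*$-regular $*$-subring of the unit-regular and $*$-clean member $T$ of $\mc{V}$, as required. No genuinely hard obstacle arises, as the argument merely assembles the results already established; the only delicate point is the verification in the second paragraph that the coordinate images $S_i$ are again artinian $*$-regular rings, which is what lets $S$ present itself as a subdirect product in the exact sense needed by Theorem~\ref{subd}.
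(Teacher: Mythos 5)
Your proof is correct and follows essentially the same route as the paper: both use Birkhoff's theorem to exhibit a subdirect product of artinian $*$-regular rings in $\mc{V}$ that maps homomorphically onto the given member, and then feed that subdirect product into Theorem~\ref{subd}. The paper phrases the Birkhoff step via free algebras (free algebras in $\mc{V}$ are subdirect products of subalgebras of the artinian generators), whereas you work directly with the $\mathbf{HSP}$ witness of the given member and verify that its coordinate images are artinian; this is only a cosmetic difference, since both arguments rest on the same observation that subalgebras (closed under pseudo-inversion) of artinian $*$-regular rings are again artinian.
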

\begin{proof} For any variety, given a class
$\mathcal{G}$ of generators, free algebras are
subdirect products of subalgebras of members of
$\mathcal{G}$.
\end{proof}

\section{Open problems}

Let $\mathcal{A}$  denote the variety generated 
by artinian $*$-regular rings.
Recall that subdirectly irreducible members of $\mathcal{A}$
are representable, whence all members of $\mathcal{A}$
are directly finite.
\begin{enumerate}
\item Is every $*$-regular ring directly finite?
\item Is every  unit-regular
$*$-regular ring a member of $\mathcal{A}$?
\item Is every subdirectly irreducible member of $\mathcal{A}$ unit-regular?
\item Is a subdirectly irreducible $*$-regular  ring $R$ unit-regular
provided so are all $eRe$, $e$ a projection in the minimal ideal?
\end{enumerate}
Observe that  (iv) implies (iii).


\begin{thebibliography}{19}





\bibitem{berb}
\uppercase{Berberian,~St. K.}:
\textit{Baer *-rings},
Die Grundlehren der mathematischen Wissenschaften, Band 195, Springer-Verlag, New York-Berlin, 1972.


\bibitem{berb2}
\uppercase{Berberian,~St. K.}:
\textit{Baer rings and Baer *-rings},
University of Texas at Austin, 2003 
\url{https://web.ma.utexas.edu/mp_arc/c/03/03-181.pdf?}.


\bibitem{CK}
\uppercase{Chang,~C. C.---  Keisler,~H. J.}:
\textit{Model Theory},
Third ed., Amsterdam, 1990.

\bibitem{ehr} 
\uppercase{Ehrlich,~G.}:
\textit{Units and one-sided units in regular rings},
Trans. Amer. Math. Soc.
\textbf{216}
(1976), 81--90.

\bibitem{FU}
\uppercase{Faith,~C.--- Utumi,~Y.}: \textit{On a new proof of Litoff's theorem},
 Acta Math.Acad. Sci Hungar. \textbf{14} (1963), 369--371.

\bibitem{good}
\uppercase{Goodearl,~K. R.}:
\textit{Von Neumann Regular Rings},
Krieger, Malabar, 1991.

\bibitem{good2}
\uppercase{Goodearl,~K. R.---  Menal,~P.--- Moncasi,~J.}:
\textit{Free and residually Artinian regular rings},
J. Algebra \textbf{156} (1993), 407--432. 




\bibitem{dfin} 
\uppercase{Herrmann,~C.}:
\textit{Direct finiteness of representable regular $\ast$-rings},
Algebra Universalis \textbf{80}  (2019), Art. 3, 5 pp. 



\bibitem{hr}
\uppercase{Herrmann,~C.--- Roddy,~M. S.}:
\textit{A second note on the equational theory of modular ortholattices},
Algebra Universalis \textbf{57} (2007), 371--373. 




\bibitem{PartI}
\uppercase{Herrmann,~C.--- Semenova,~M.}:
\textit{Rings of quotients of finite $AW^\ast$-algebras: Representation and algebraic approximation},
Algebra and Logic \textbf{53} (2014), 298--322.


\bibitem{act} 
\uppercase{Herrmann,~C.---  Semenova,~M.}:
\textit{Linear representations of regular rings and complemented modular
lattices with involution}, Acta Sci. Math.
 (Szeged) \textbf{82} (2916), 395--442.



\bibitem{Kap}
\uppercase{Kaplansky,~I.}:
 \textit{Any orthocomplemented complete modular lattice is a continuous geometry},
Ann.  Math. \textbf{61} (1955), 524--541.



\bibitem{Flo}
\uppercase{Micol,~F.}:
\textit{On Representability of $\ast$-Regular Rings and Modular Ortholattices},
PhD thesis, Technische Universit\"{a}t Darmstadt (2003),
\url{http://elib.tu-darmstadt.de/diss/000303/diss.pdf}.


\bibitem{Nik}
\uppercase{Niemann,~N.}:
\textit{On representability of $\ast$-regular rings in endomorphism rings of vector spaces}, PhD thesis
Technische Universit\"{a}t Darmstadt (2007), Logos, Berlin, 2007.










\bibitem{tyu}
\uppercase{Tyukavkin,~D. V.}:
\textit{Regular rings with involution},
Vestnik Moskovskogo Universiteta, Matematika \textbf{39} (1984), 29--32.

\bibitem{vas}
\uppercase{Va\v{s},~L.}:
\textit{$\ast$-clean rings; some clean and almost clean Baer $\ast$-rings and von Neumann
algebras}, 
J. Algebra \textbf{324} (12)
(2010), 3388--3400.

\end{thebibliography}
\end{document}